\documentclass{article}
\usepackage{amssymb}
\usepackage{amsfonts}
\usepackage{amsmath}

\setcounter{MaxMatrixCols}{10}

\newtheorem{theorem}{Theorem}

\newtheorem{definition}[theorem]{Definition}

\newtheorem{remark}[theorem]{Remark}

\newenvironment{proof}[1][Proof]{\noindent\textbf{#1.} }{\ \rule{0.5em}{0.5em}}
\begin{document}

\title{The Stefan problem with mushy region as a scaling limit of stochastic
PDE with turbulent transport}
\author{Ioana CIOTIR\footnote{Normandie University, INSA de Rouen Normandie, LMI (EA 3226 – FR CNRS 3335), 76000 Rouen, France, {\tt\small ioana.ciotir@insa-rouen.fr}}
\and
Franco FLANDOLI\footnote{Scuola normale superiore di Pisa, Piazza dei Cavalieri, 7, 56126 Pisa PI, Italie {\tt\small franco.flandoli@sns.it}}
\and
Dan GOREAC\footnote{\'{E}cole d'Actuariat, Universit\'{e} Laval, QC G1V0B3, Qu\'{e}bec, Canada {\tt\small dan.goreac@act.ulaval.ca} and LAMA, Univ Gustave Eiffel, UPEM, Univ Paris Est Creteil, CNRS, F-77447 Marne-la-Vall\'{e}e, France, {\tt\small dan.goreac@univ-eiffel.fr}}
}
\maketitle

\begin{abstract}
This work establishes a scaling limit theorem for the Stefan problem incorporating a mushy region, demonstrating that solutions to stochastic variants with turbulent transport terms converge to the solution to a deterministic partial differential equation. The analysis builds upon recent advances in stochastic phase-change modeling and turbulent flow mathematics in \cite{IFD1}. In the physical interpretation of an ice melting process, our result shows that turbulence accelerates ice melting.
\end{abstract}

\textbf{Keywords:} Stefan problem, maximal monotone operators, turbulence, transport noise, scaling limit

\textbf{MSC (2020):} 60H15, 80A22, 76D03

\section{Introduction}

\subsection{The physical model}

In the present work we are interested in the scaling limit of a Stefan type
problem describing the melting/solidification process with mushy region and
turbulent transport noise. We know by classical theory that the model can be
written as%
\begin{equation}
\left\{ 
\begin{array}{l}
\frac{d}{dt}\gamma \left( \theta \right) -\textit{div}\left( k\left( \theta
\right) \nabla \theta \right) +u\cdot \nabla \eta \left( \theta \right) =F
\\ 
\theta \left( 0,\xi \right) =\theta _{0}\left( \xi \right) ,%
\end{array}%
\right.  \label{equ1}
\end{equation}%
where $\gamma \left( r\right) =C\left( r\right) +l\cdot H\left( r\right) $
with 
\begin{equation*}
C\left( r\right) =\left\{ 
\begin{array}{cc}
C_{1}r, & \quad r\leq 0 \\ 
C_{2}r, & \quad r>0,%
\end{array}%
\right.
\end{equation*}%
where $H$ is the multivalued Heaviside function%
\begin{equation*}
H\left( r\right) =\left\{ 
\begin{array}{ll}
0, & \quad r<0 \\ 
\left[ 0,1\right] , & \quad r=0 \\ 
1, & \quad r>0,%
\end{array}%
\right.
\end{equation*}
and%
\begin{equation*}
k\left( r\right) =\left\{ 
\begin{array}{cc}
k_{1}, & \quad r\leq 0 \\ 
k_{2,} & \quad r>0.%
\end{array}%
\right.
\end{equation*}

The solution $\theta $ to the previous equation is interpreted as the
temperature which is assumed to be positive in the liquid phase and negative
in the solid phase. We denoted by $C_{1}$ and $C_{2}$ the specific heat of
the two phases and by $k_{1}$ and $k_{2}$ the thermal conductivity. The
constant $l$ is the latent heat intervening in the melting/solidification
phenomena.

The function $\eta $ is assumed to be Lipschitz and smooth, vanishing in the
solid phase and in a small region of the liquid phase which is situated very
close to the boundary between ice and water. From the physical point of view
it is coherent to assume this because the term $u\cdot \nabla \eta \left(
\theta \right) $ is modeling the turbulence which can't be present in the
solid phase nor on the boundary of the liquid phase.

The function $F$ is to be fixes and assumed to be the heating source which
is smooth enough from the mathematical point of view.

Concerning the velocity of the fluid, a good model for the turbulent fluid
has the form%
\begin{equation*}
u\left( t,\xi \right) =\sum_{k=1}^{\infty }\alpha _{k}\sigma _{k}\circ
d\beta _{k}
\end{equation*}%
where we denoted by $\left\{ \alpha _{k}\right\} _{k}$ a sequence of
positive constants which were chosen such that the assumptions below to be
satisfied. The sequence $\left\{ \sigma _{k}\right\} _{k}$ is formed by
divergence free smooth vector fields, $\left\{ \beta _{k}\right\} $ is a
sequence of independent Brownian motions and the integral is considered in
the sense of Stratonovich, thus aligning with the Wong-Zakai principle that establishes convergence of approximations to stochastic differential equations (see rigorous analysis in \cite{Debussche}). This approach is motivated by the emergence of turbulent effects in the liquid phase during phase transitions, where thermal gradients between phases induce complex fluid instabilities.

The noise model described has gained significant attention in contemporary research, particularly regarding its mathematical properties and physical implications. For foundational aspects and recent developments, we direct readers to key works including \cite{Flandoli2}, \cite{Flandoli1}, \cite{Flandoli5}, \cite{Flandoli-book}, and \cite{Flandoli-Pappalettera}. These investigations establish crucial connections between stochastic calculus frameworks and observed turbulent phenomena in multi-phase systems.\\

In order to study this equation we need first to note that%
\begin{equation*}
-\textit{div}\left( k\left( \theta \right) \nabla \theta \right) =-\Delta
K\left( \theta \right)
\end{equation*}%
where $K$ is%
\begin{equation*}
K\left( r\right) =\left\{ 
\begin{array}{cc}
k_{1}r, & \quad r\leq 0 \\ 
k_{2}r, & \quad r>0,%
\end{array}%
\right.
\end{equation*}%
and we rewrite the equation (\ref{equ1}) as%
\begin{equation*}
\left\{ 
\begin{array}{l}
d\gamma \left( \theta \right) -\Delta K\left( \theta \right) dt+u\cdot
\nabla \eta \left( \theta \right) =F \\ 
\theta \left( 0,\xi \right) =\theta _{0}\left( \xi \right) .%
\end{array}%
\right.
\end{equation*}

We shall now take the change of variable $\gamma \left( \theta \right) =X$
and we get%
\begin{equation*}
\left\{ 
\begin{array}{l}
dX-\Delta K\left( \gamma ^{-1}\left( X\right) \right) dt+u\cdot \nabla \eta
\left( \gamma ^{-1}\left( X\right) \right) =F \\ 
X\left( 0,\xi \right) =x_{0}\left( \xi \right) .%
\end{array}%
\right.
\end{equation*}

For more clearness we shall denote by $\Psi _{0}=K\circ \gamma ^{-1}$ and $%
\Gamma _{0}=\eta \circ \gamma ^{-1}$ and rewrite the equation as follows%
\begin{equation*}
\left\{ 
\begin{array}{l}
dX-\Delta \Psi _{0}\left( X\right) dt+u\cdot \nabla \Gamma _{0}\left(
X\right) =F \\ 
X\left( 0,\xi \right) =x_{0}\left( \xi \right) .%
\end{array}%
\right.
\end{equation*}

Keeping in mind the physical application and the fact that the interface
between ice and water is not sharp, it is reasonable to assume that the
mushy region between the two phases is modeled by replacing the Heaviside
from $\gamma $ by a smoothed function and consequently to replace $\gamma $
by the corresponding smoothed $\widetilde{\gamma }$ and also $\Psi _{0}$ and 
$\Gamma _{0}$ by the smooth version denoted by $\Psi $ and $\Gamma $.

\bigskip

\textbf{Hypotheses}

In this setting we assume that

\begin{itemize}
\item $\Psi $ and $\Gamma $ are assumed to be strictly monotone such that $%
\Psi ^{\prime }\geq \psi _{0}>0$ and $\Gamma ^{\prime }\geq \gamma _{0}>0$.
Note that $\Psi $ and $\Gamma $ as assumed before are Lipschitz continuos,
and to be null in zero.

\item $\widetilde{\gamma }^{-1}$ is assumed to satisfy that $\left( 
\widetilde{\gamma }^{-1}\right) ^{\prime }<1$. (This assumption is not
restrictive and is necessary in order to apply the existence result for the
stochastic equation.)
\end{itemize}

\bigskip

We shall now treat rigorously the following equation%
\begin{equation}
\left\{ 
\begin{array}{l}
dX-\Delta \Psi \left( X\right) dt+u\cdot \nabla \Gamma \left( X\right) =F \\ 
X\left( 0,\xi \right) =x_{0}\left( \xi \right) .%
\end{array}%
\right.  \label{equ2}
\end{equation}

For this reason we shall introduce now the suitable mathematical framework.

\subsection{Functional setting}

\bigskip

We consider $\Pi ^{2}=\mathbb{R}^{2}/%
%TCIMACRO{\U{2124} }%
%BeginExpansion
\mathbb{Z}
%EndExpansion
^{2}$ the two dimensional torus and $%
%TCIMACRO{\U{2124} }%
%BeginExpansion
\mathbb{Z}
%EndExpansion
_{0}^{2}=%
%TCIMACRO{\U{2124} }%
%BeginExpansion
\mathbb{Z}
%EndExpansion
^{2}\backslash \left\{ 0\right\} $ the nonzero lattice points. Let $\left(
H^{s,p}\left( \Pi ^{2}\right) ,\left\Vert \cdot \right\Vert
_{H^{s,p}}\right) ,$ $s\in \mathbb{R}$, $p\in \left( 1,\infty \right) $ be a
Bessel space of zero mean periodic functions.

We denote $H^{s}\left( \Pi ^{2}\right) =H^{s,2}\left( \Pi ^{2}\right) $ and
we denote by $\left\langle \cdot ,\cdot \right\rangle _{H^{s}}$ the
corresponding scalar product. For $s>0$ we can consider $H^{-s}$ the dual of 
$H^{s}$ and denote $\left\langle \cdot ,\cdot \right\rangle _{H^{-s},H^{s}}$
the duality pairing.

The Bessel space of zero mean vector field are defined as 
\begin{equation*}
\mathbf{H}^{s,p}=\left\{ \left( u_{1},u_{2}\right) ^{t}\left\vert
~u_{1},u_{2}\in H^{s,p}\left( \Pi ^{2}\right) \right. \right\}
\end{equation*}%
endowed with the scalar product%
\begin{equation*}
\left\langle u,v\right\rangle _{\mathbf{H}^{s}}=\left\langle
u_{1},v_{1}\right\rangle _{H^{s}}+\left\langle u_{2},v_{2}\right\rangle
_{H^{s}},\quad \forall u,v\in \mathbf{H}\text{ and }\forall s\in \mathbb{R}%
\text{.}
\end{equation*}%
Again, for $s=0$ we denote $\mathbf{H}^{0,2}=\mathbf{L}^{2}.$

If we consider $Z$ a separable Hilbert space endowed with the norm $%
\left\Vert \cdot \right\Vert _{Z},$ we denoted by $C_{\mathcal{F}}^{W}\left( %
\left[ 0,T\right] ;Z\right) $ the space of weakly continuous processes $%
\left( X_{t}\right) _{t\in \left[ 0,T\right] }$ defined on a filtered
probability space $\left( \Omega ,\mathcal{F},\left( \mathcal{F}_{t}\right)
_{t\geq 0},\mathbb{P}\right) ,$ with values in $Z$ and adopted, such that%
\begin{equation*}
\mathbb{E}\left[ \underset{t\in \left[ 0,T\right] }{\sup }\left\Vert
X_{t}\right\Vert _{Z}^{2}\right] <\infty
\end{equation*}%
and by $L_{\mathcal{F}}^{p}\left( 0,T;Z\right) ,$ $p\in \left[ 1,\infty
\right) $ the space of progressively measurable processes $\left(
X_{t}\right) _{t\in \left[ 0,T\right] }$ such that 
\begin{equation*}
\mathbb{E}\left[ \int_{0}^{T}\left\Vert X_{t}\right\Vert _{Z}^{2}\right]
<\infty .
\end{equation*}

\bigskip

\subsection{The mathematical model}

\bigskip

We can now rigorously define as follows, the mathematical model which allows
us to study equation (\ref{equ2}).

First, in the turbulence term $u\cdot \nabla \Gamma \left( X\right) $ we
take the Stratonovich interpretation%
\begin{equation*}
u\left( t,\xi \right) =\sum_{k\in 
%TCIMACRO{\U{2124} }%
%BeginExpansion
\mathbb{Z}
%EndExpansion
_{0}^{2}}\alpha _{k}\sigma _{k}\circ d\beta _{k}
\end{equation*}%
and we get 
\begin{equation}
u\cdot \nabla \Gamma \left( X\right) =\sum_{k\in 
%TCIMACRO{\U{2124} }%
%BeginExpansion
\mathbb{Z}
%EndExpansion
_{0}^{2}}\alpha _{k}\sigma _{k}\nabla \Gamma \left( X\right) \circ d\beta
_{k}  \label{noise}
\end{equation}%
where $\Gamma $ is assumed to be Lipschitz and smooth and such that $\Gamma
\left( r\right) =0$ on $\left( -\infty ,\varepsilon \right) $ for some $%
\varepsilon >0$ very mall and $\left\vert \Gamma \left( r\right) ^{\prime
}\right\vert \leq const.$

Moreover $\left( \alpha _{k}\right) _{k}\in l^{2}\left( 
%TCIMACRO{\U{2124} }%
%BeginExpansion
\mathbb{Z}
%EndExpansion
^{2}\right) $ satisfies 
\begin{equation}
\sum\limits_{k\in 
%TCIMACRO{\U{2124} }%
%BeginExpansion
\mathbb{Z}
%EndExpansion
_{0}^{2}}\frac{\alpha _{k}^{2}}{\vert k\vert^2}=1,\text{ and }\alpha _{k}=\alpha _{l}\text{ if }%
\left\vert k\right\vert =\left\vert l\right\vert .  \label{ip1}
\end{equation}

Finally $\left( \sigma _{k}\right) _{k}$ is a standard orthonormal basis of
divergence free vectors fields in $\mathbf{L}^{2}.$ More precisely, for a
two dimensional torus $\Pi ^{2}$ which will be seen as $\left[ -1/2,1/2%
\right] $ endowed with periodic boundary conditions, we have the usual
trigonometric basis of $L^{2}\left( \Pi ^{2}\right) $ which is%
\begin{equation*}
e_{k}\left( x\right) =\sqrt{2}\left\{ 
\begin{array}{cc}
\cos \left( 2\pi kx\right) , & k\in 
%TCIMACRO{\U{2124} }%
%BeginExpansion
\mathbb{Z}
%EndExpansion
_{+}^{2} \\ 
\sin \left( 2\pi kx\right) , & k\in 
%TCIMACRO{\U{2124} }%
%BeginExpansion
\mathbb{Z}
%EndExpansion
_{-}^{2}%
\end{array}%
\right. \text{ for all }x\in \Pi ^{2}
\end{equation*}%
for 
\begin{equation*}
%TCIMACRO{\U{2124} }%
%BeginExpansion
\mathbb{Z}
%EndExpansion
_{+}^{2}=\left\{ k\in 
%TCIMACRO{\U{2124} }%
%BeginExpansion
\mathbb{Z}
%EndExpansion
_{0}^{2}~\left\vert \left( k_{1}>0\right) \text{ or }\left(
k_{1}=0,k_{2}>0\right) \right. \right\}
\end{equation*}%
and $%
%TCIMACRO{\U{2124} }%
%BeginExpansion
\mathbb{Z}
%EndExpansion
_{-}^{2}=-%
%TCIMACRO{\U{2124} }%
%BeginExpansion
\mathbb{Z}
%EndExpansion
_{+}^{2}$.

We define 
\begin{equation*}
\sigma _{k}\left( x\right) =\frac{k^{\bot }}{\left\vert
k\right\vert ^{2}}e_{k}\left( x\right) ,\quad k\in 
%TCIMACRO{\U{2124} }%
%BeginExpansion
\mathbb{Z}
%EndExpansion
_{0}^{2}\text{ with }k^{\bot }=\left( k_{2},-k_{1}\right) .
\end{equation*}

Finally $\left( \beta _{k}\right) _{k}$ is a sequence of independent standard, one-dimensional
Brownian motions supported on the probability space introduced before.

The Stratonovich noise that we introduced previously can be formulated in It%
\^{o} form by the following transformation%
\begin{eqnarray}
\sum_{k\in 
%TCIMACRO{\U{2124} }%
%BeginExpansion
\mathbb{Z}
%EndExpansion
_{0}^{2}}\alpha _{k}\sigma _{k}\nabla \Gamma \left( X\right) \circ d\beta
_{k} &=&\sum_{k\in 
%TCIMACRO{\U{2124} }%
%BeginExpansion
\mathbb{Z}
%EndExpansion
_{0}^{2}}\alpha _{k}\sigma _{k}\nabla \Gamma \left( X\right) d\beta _{k}
\label{I-S} \\
&&-\frac{1}{2}\sum_{k\in 
%TCIMACRO{\U{2124} }%
%BeginExpansion
\mathbb{Z}
%EndExpansion
_{0}^{2}}\alpha _{k}^{2}\textit{div}\left[ \left( \Gamma ^{\prime }\left(
X\right) \right) ^{2}\sigma _{k}\otimes \sigma _{k}\nabla X\right] dt. 
\notag
\end{eqnarray}

By applying Lemma 2.6 from \cite{Flandoli-Luo2020} we have that%
\begin{equation*}
\sum_{k\in 
%TCIMACRO{\U{2124} }%
%BeginExpansion
\mathbb{Z}
%EndExpansion
_{0}^{2}}\alpha _{k}^{2}\left( \sigma _{k}\otimes \sigma _{k}\right) =\frac{1%
}{2}I_{2},
\end{equation*}%
where $I_{2}$ is a two-dimensional identity matrix.

We denote by 
\begin{equation*}
g\left( r\right) =\frac{1}{4}\int_{0}^{r}\left( \Gamma ^{\prime }\left(
x\right) \right) ^{2}dx,\quad r\in \mathbb{R}
\end{equation*}%
which satisfies $g\left( 0\right) =0$ and is Lipschitz from the properties
of $\Gamma .$

Going back to (\ref{I-S}) we get that%
\begin{eqnarray}
&&\sum_{k\in 
%TCIMACRO{\U{2124} }%
%BeginExpansion
\mathbb{Z}
%EndExpansion
_{0}^{2}}\alpha _{k}\sigma _{k}\nabla \Gamma \left( X\right) \circ d\beta
_{k}  \label{I-S2} \\
&=&\sum_{k\in 
%TCIMACRO{\U{2124} }%
%BeginExpansion
\mathbb{Z}
%EndExpansion
_{0}^{2}}\alpha _{k}\sigma _{k}\nabla \Gamma \left( X\right) d\beta _{k}-%
\frac{1}{4}\textit{div}\left[ \left( \Gamma ^{\prime }\left( X\right) \right)
^{2}\nabla X\right] dt  \notag \\
&=&\sum_{k\in 
%TCIMACRO{\U{2124} }%
%BeginExpansion
\mathbb{Z}
%EndExpansion
_{0}^{2}}\alpha _{k}\sigma _{k}\nabla \Gamma \left( X\right) d\beta
_{k}-\Delta g\left( X\right) .  \notag
\end{eqnarray}

We can rigorously write equation (\ref{equ2}) as%
\begin{equation}
\left\{ 
\begin{array}{l}
dX-\Delta \Psi \left( X\right) dt-\Delta g\left( X\right) dt+\sum_{k\in 
%TCIMACRO{\U{2124} }%
%BeginExpansion
\mathbb{Z}
%EndExpansion
_{0}^{2}}\alpha _{k}\sigma _{k}\nabla \Gamma \left( X\right) d\beta _{k}=F
\\ 
X\left( 0,\xi \right) =x_{0}%
\end{array}%
\right.   \label{ecuf}
\end{equation}

The existence of solution for the equation above was recently studied in the
case of a general domain in \cite{IFD1} in the sense of the definition
below. This result obviously applies in the present case by choosing a
particular domain and a particular form of the orthonormal basis which
appears in the construction of the noise.

\begin{definition}
Let $x\in L^{2}\left( \Pi ^{2}\right) .$ We say that equation (\ref{ecuf})
has a weak solution if there exist

\begin{itemize}
\item a filtered reference probability space $\left( \Omega ,\mathcal{F}%
,\left( \mathcal{F}_{t}\right) _{t\geq 0},\mathbb{P}\right) $,

\item a sequence of independent $\mathcal{F}_{t}$ Brownian motions,

\item an $H^{-1}\left( \Pi ^{2}\right) -$valued continuous $\mathcal{F}_{t}-$%
adapted process $X\in L^{2}\left( 0,T;L^{2}\left( \Pi ^{2}\right) \right) $
\end{itemize}

and the following holds true%
\begin{eqnarray*}
\left( X\left( t\right) ,e_{j}\right) _{2} &=&\left( x,e_{j}\right)
_{2}+\int_{0}^{t}\left( F\left( s\right) ,e_{j}\right) _{2}ds \\
&&+\int_{0}^{t}\left( \Psi \left( X\left( s\right) \right) ,\Delta
e_{j}\right) _{2}ds+\int_{0}^{t}\left( g\left( X\left( s\right) \right)
,\Delta e_{j}\right) _{2}ds \\
&&+\sum_{k\in 
%TCIMACRO{\U{2124} }%
%BeginExpansion
\mathbb{Z}
%EndExpansion
_{0}^{2}}\int_{0}^{t}\alpha _{k}\left( \sigma _{k}\Gamma \left( X\left(
s\right) \right) ,\nabla e_{j}\right) d\beta _{k}\left( s\right) .
\end{eqnarray*}
\end{definition}

\bigskip

\section{The scaling limit and the main result}

\bigskip

Following the idea introduced in \cite{galeati} we consider a sequence $%
\left\{ \alpha ^{N}\right\} _{N\in \mathbb{N}}\subseteq l^{2}\left( \mathbb{Z%
}_{0}^{2}\right) $ constructed as before and satisfying%
\begin{equation}
\underset{N\rightarrow \infty }{\lim }\left\Vert \alpha ^{N}\right\Vert
_{l^{\infty }}=0 , \label{ip2}
\end{equation}%
and we denote $X^{N}$ the corresponding solutions of equation (\ref{ecuf})
with $\left\{ \alpha _{k}^{N}\right\} $ instead of $\left\{ \alpha
_{k}\right\} $ in the construction of the noise.

In the present work we prove that the law of $X^{N}$ converges in the usual weak (or, more precisely weak-*) sense to a
Dirac mass concentrated on the unique solution to the following deterministic porous media
equation%
\begin{equation}
\left\{ 
\begin{array}{l}
dX-\Delta \Psi \left( X\right) dt-\Delta g\left( X\right) dt=F \\ 
X\left( 0,\xi \right) =x.%
\end{array}%
\right.  \label{ecul}
\end{equation}
The new term $\Delta g(X)$ enhances the diffusion given byt the term $\Delta \Psi(X)$. Therefore our result may be interpreted as ice melting enhancement by a turbulent fluid.\\
Since $\Psi $ and $g$ are both maximal monotone and Lipschitz operators (as real-valued functions, hence as $L^2$-defined functionals), the
equation \eqref{ecul} has a unique strong solution owing to the classical PDE theory
(see e.g. \cite{nonlin}).

We can write the solution to equation (\ref{ecul}) as a PDE weak one
\[X\in L^{2}\left( \left( 0,T\right) \times \Pi ^{2}\right) \cap C\left( %
\left[ 0,T\right] \times H^{-1}\left( \Pi ^{2}\right) \right)\] in the
following form%
\begin{eqnarray*}
\left( X\left( t\right) ,e_{j}\right) _{2} &=&\left( x,e_{j}\right)
_{2}+\int_{0}^{t}\left( F\left( s\right) ,e_{j}\right) _{2}ds \\
&&+\int_{0}^{t}\left( \Psi \left( X\left( s\right) \right) ,\Delta
e_{j}\right) _{2}ds+\int_{0}^{t}\left( g\left( X\left( s\right) \right)
,\Delta e_{j}\right) _{2}ds,
\end{eqnarray*}%
for all test functions $e_{j}$ provided above.

We are now able to state and prove the main result of the paper.

\begin{theorem}
Let $\left\{ \alpha ^{N}\right\} _{N\in \mathbb{N}}\subseteq l^{2}\left( 
\mathbb{Z}_{0}^{2}\right) $ be a sequence satisfying the assumptions (\ref%
{ip1}) and (\ref{ip2}). We denote \begin{itemize}
    \item by $X^{N}$ the corresponding solutions
to the equations (\ref{ecuf}), where $\alpha$ is replaced with $(\alpha^N)$, and 
\item by $\nu ^{N}:=\mathbb{P}_{X^N}$ the law of these solutions supported by $%
L^{2}\left( \left( 0,T\right) \times \Pi ^{2}\right) \cap C\left( \left[ 0,T%
\right] ;H^{-1}\left( \Pi ^{2}\right) \right)$.
\end{itemize}
Then, the family $\left\{
\nu ^{N}\right\} $ is tight on $C\left( \left[ 0,T\right] ;H^{-1}\left( \Pi
^{2}\right) \right) $, and it converges weakly to the Dirac measure $\delta _{X}$%
\ where $X$ is the unique solution of the equation (\ref{ecul}).
\end{theorem}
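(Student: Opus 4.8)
The plan is to follow the now-classical strategy for scaling limits of transport-noise SPDEs in the spirit of \cite{galeati}: derive uniform-in-$N$ a priori bounds, deduce tightness of $\{\nu^N\}$, pass to the limit along a Skorokhod subsequence, show that the turbulent term disappears, and finally identify the limit via the uniqueness of (\ref{ecul}).

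\textbf{A priori bounds.} First I would apply It\^o's formula to $\|X^N\|_{H^{-1}}^2$. Testing $-\Delta \Psi(X^N)$ and $-\Delta g(X^N)$ against $X^N$ in $H^{-1}$ produces $(\Psi(X^N),X^N)_{2}\ge \psi_0\|X^N\|_{L^2}^2$ and $(g(X^N),X^N)_{2}\ge 0$ by monotonicity together with $\Psi(0)=g(0)=0$. The crucial point is that both the It\^o correction $-\Delta g$ and the martingale quadratic variation involve the coefficients only through the $N$-independent combination $\sum_k (\alpha_k^N)^2\,\sigma_k\otimes\sigma_k=\tfrac12 I_2$ (equivalently $\sum_k(\alpha_k^N)^2|\sigma_k(x)|^2=1$ pointwise), so the energy balance closes with an $N$-independent constant, giving a uniform bound $\mathbb{E}\sup_{t\le T}\|X^N(t)\|_{H^{-1}}^2+\mathbb{E}\int_0^T\|X^N\|_{L^2}^2\,dt\le C$. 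Lipschitz continuity of $\Psi,g,\Gamma$ then yields uniform bounds for $\Psi(X^N),g(X^N),\Gamma(X^N)$ in $L^2((0,T)\times\Pi^2)$.

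\textbf{Tightness and extraction.} From the equation, the drift $\Delta(\Psi+g)(X^N)$ is bounded in $L^2(0,T;H^{-2})$ while, by Burkholder--Davis--Gundy and the quadratic-variation estimate, the martingale part is bounded in $W^{\alpha,2}(0,T;H^{-s})$ for some $\alpha<1/2$, $s\ge 1$. Interpolating with the $L^2(0,T;L^2)$ bound and using the compact embeddings $L^2\hookrightarrow\hookrightarrow H^{-1}\hookrightarrow H^{-s}$, a Simon / Flandoli--Gatarek compactness criterion gives tightness of $\{\nu^N\}$ on $C([0,T];H^{-1})$. By Prokhorov and the Skorokhod representation theorem, along a subsequence there are processes $\tilde X^N$ (of law $\nu^N$) converging $\mathbb{P}$-a.s. in $C([0,T];H^{-1})$ to some $\tilde X$, and weakly in $L^2(\Omega\times(0,T);L^2)$. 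Fixing a test function $e_j$, every linear term of the weak formulation then converges. The reason turbulence vanishes in the limit is that the martingale $N^N_j(t)=\sum_k\int_0^t\alpha_k^N(\sigma_k\Gamma(\tilde X^N),\nabla e_j)\,d\beta_k$ has vanishing quadratic variation: expressing $\sigma_k\cdot\nabla e_j$ in the trigonometric basis shows that $(\sigma_k\Gamma(\tilde X^N),\nabla e_j)$ is, up to a factor bounded by $C|j|/|k|$, a Fourier coefficient of $\Gamma(\tilde X^N)$ at frequency $k\pm j$; bounding one factor $(\alpha_k^N)^2$ by $\|\alpha^N\|_{\ell^\infty}^2$ and summing the squared coefficients via Bessel's inequality gives $\mathbb{E}\langle N^N_j\rangle_T\le C_j\,\|\alpha^N\|_{\ell^\infty}^2\,\mathbb{E}\int_0^T\|\Gamma(\tilde X^N)\|_{L^2}^2\,dt\to 0$. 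Hence the stochastic term drops out. Note that a naive Cauchy--Schwarz bound would only give $O(1)$ because of the normalization (\ref{ip1}); the orthogonality of the Fourier modes is what converts the $\ell^\infty$-smallness (\ref{ip2}) into the decay.

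\textbf{Identification of the nonlinear limits and conclusion.} The main obstacle is passing to the limit in the nonlinear terms $(\Psi(\tilde X^N),\Delta e_j)$ and $(g(\tilde X^N),\Delta e_j)$: convergence in $C([0,T];H^{-1})$ is too weak to pass a nonlinearity through, and no uniform positive-order spatial bound is available. I would resolve this by a monotonicity (Minty) argument. Let $\chi,\zeta$ denote the weak $L^2$ limits of $\Psi(\tilde X^N),g(\tilde X^N)$; passing to the limit (using the vanishing of the martingale) shows that $\tilde X$ satisfies (\ref{ecul}) with $\chi,\zeta$ in place of $\Psi(\tilde X),g(\tilde X)$. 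The $H^{-1}$ energy balance, its lower semicontinuity, and the monotonicity of $\Psi$ and $g$ then force $\chi=\Psi(\tilde X)$ and $\zeta=g(\tilde X)$. (Alternatively, a uniform $L^2(0,T;H^1)$ estimate on $(\Psi+g)(X^N)$, obtained by testing the equation against $(\Psi+g)(X^N)$, would provide strong $L^2$ compactness and bypass Minty, provided the noise contribution is controllable in that estimate; I would treat this as the fallback route.) Thus $\tilde X$ is a weak solution of the deterministic equation (\ref{ecul}), and by the uniqueness recalled after (\ref{ecul}) it coincides with the deterministic $X$. Consequently the Skorokhod limit, and hence the subsequential weak limit of $\nu^N$, equals $\delta_X$; since every subsequence admits a further subsequence with this same limit, the whole family converges, $\nu^N\rightharpoonup\delta_X$, which is the assertion.
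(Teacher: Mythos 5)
Your overall architecture coincides with the paper's: uniform a priori bounds, tightness via a Simon-type compactness criterion, Skorokhod representation, vanishing of the martingale through the $\ell^\infty$-smallness of $\alpha^N$ combined with Bessel's inequality for the orthonormal family $\{\sigma_k\}$, identification of the nonlinear limits by monotonicity, and conclusion by uniqueness for \eqref{ecul} plus a subsequence argument. The martingale estimate and the tightness step are essentially identical to the paper's Steps II--III.

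The genuine gap is in your a priori estimate. You apply It\^o's formula to $\|X^N\|_{H^{-1}}^2$ and assert that ``the energy balance closes with an $N$-independent constant'' because $\sum_k(\alpha_k^N)^2\sigma_k\otimes\sigma_k=\tfrac12 I_2$. The $N$-independence is correct, but closing the balance is not automatic in $H^{-1}$: the exact cancellation between the It\^o--Stratonovich corrector and the quadratic variation is an $L^2$-norm phenomenon. In $H^{-1}$ the corrector contributes $+2(g(X^N),X^N)_2$ while the quadratic variation is $\sum_k(\alpha_k^N)^2\|\sigma_k\cdot\nabla\Gamma(X^N)\|_{H^{-1}}^2$, which is only bounded by $\|\Gamma(X^N)\|_{L^2}^2$; since $2(g(X^N),X^N)_2\ge\tfrac12\|\Gamma(X^N)\|_{L^2}^2$ but not more in general, an excess of order $\|\Gamma(X^N)\|_{L^2}^2\le L_\Gamma^2\|X^N\|_{L^2}^2$ remains, which can neither be Gronwalled against $\|X^N\|_{H^{-1}}^2$ nor absorbed into the dissipation $2\psi_0\|X^N\|_{L^2}^2$ without an unassumed smallness relation between $L_\Gamma$ and $\psi_0$. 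The paper instead applies It\^o's formula to $\|X^N\|_{L^2}^2$: there the transport martingale vanishes identically (since $(\sigma_k\Gamma(X^N),\nabla X^N)_2=-(\widetilde\Gamma(X^N),\mathrm{div}\,\sigma_k)_2=0$) and $2(\nabla g(X^N),\nabla X^N)_2=\tfrac12\|\nabla\Gamma(X^N)\|_{L^2}^2$ cancels the quadratic variation exactly, yielding the pathwise bound \eqref{estim3}, which in particular controls $\int_0^T\|X^N\|_{H^1}^2$.

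This is not merely a matter of elegance: the $L^2(0,T;H^1)$ bound you lose is what makes the identification step work. With $\Psi(X^N),g(X^N)$ bounded in $L^2(\Omega;L^2(0,T;H^1))$, the a.s.\ strong convergence $\widetilde X^N\to\widetilde X$ in $C([0,T];H^{-1})$ pairs against the weak $L^2(0,T;H^1)$ convergence of $\Psi(\widetilde X^N)$, and maximal monotonicity identifies $\chi=\Psi(\widetilde X)$, $\rho=g(\widetilde X)$ directly. Your primary route (Minty via the $H^{-1}$ energy balance and lower semicontinuity) inherits the same problem: the $N$-level energy identity carries the non-cancelling quadratic-variation/corrector residual, whose limit involves $\lim\Gamma(\widetilde X^N)$ and must itself be identified, making the argument circular as stated. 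Your fallback (testing against $(\Psi+g)(X^N)$ to get an $H^1$ bound) is the right idea and is essentially what the $L^2$ It\^o formula delivers for free; had you led with that, the proof would match the paper's.
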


\begin{proof}
Before providing the main steps of the proof, let us emphasize that, throughout the argument, we drop the $F$ term, as it does not constitute a specific technical difficulty. We believe that the current version is easier to read and grasp the main specificity.\\
\textbf{Step I (The L}$^{2}$\textbf{\ estimate)}

We begin with employing It\^{o}'s formula applied to the squared norm in $L^{2}\left( \Pi ^{2}\right) $ of the stochastic process given by (\ref%
{ecuf}) and corresponding to the $\alpha ^{N}$\ coefficients to get%
\begin{eqnarray}
&&\left\Vert X^{N}\left( t\right) \right\Vert _{2}^{2}+2\int_{0}^{t}\left(
\nabla \Psi \left( X^{N}\left( s\right) \right) ,\nabla X^{N}\left( s\right)
\right) _{2}ds  \label{estim1} \\
&&+2\int_{0}^{t}\left( \nabla g\left( X^{N}\left( s\right) \right) ,\nabla
X^{N}\left( s\right) \right) _{2}ds  \notag \\
&=&\left\Vert x\right\Vert _{2}^{2}+\frac{1}{2}\int_{0}^{t}\left\Vert \nabla
\Gamma \left( X^{N}\left( s\right) \right) \right\Vert _{2}^{2}ds  \notag \\
&&+2\sum_{k\in 
%TCIMACRO{\U{2124} }%
%BeginExpansion
\mathbb{Z}
%EndExpansion
_{0}^{2}}\int_{0}^{T}\left( \alpha _{k}^{N}\sigma _{k}\Gamma \left(
X^{N}\left( s\right) \right) ,\nabla X^{N}\right) _{2}d\beta _{k}\left(
s\right) .  \notag
\end{eqnarray}

We consider, separately%
\begin{eqnarray*}
2\int_{0}^{t}\left( \nabla g\left( X^{N}\left( s\right) \right) ,\nabla
X^{N}\left( s\right) \right) _{2}ds &=&\frac{2}{4}\int_{0}^{t}\left( \left(
\Gamma ^{\prime }\right) ^{2}\left( X^{N}\left( s\right) \right) ,\left\vert
\nabla X^{N}\left( s\right) \right\vert ^{2}\right) _{2}ds \\
&=&\frac{1}{2}\int_{0}^{t}\left\Vert \nabla \Gamma \left( X^{N}\left(
s\right) \right) \right\Vert _{2}^{2}ds
\end{eqnarray*}%
and 
\begin{eqnarray*}
\left( \alpha _{k}^{N}\sigma _{k}\Gamma \left( X^{N}\left( s\right) \right)
,\nabla X^{N}\right) _{2} &=&\left( \alpha _{k}^{N}\nabla \widetilde{\Gamma }%
\left( X^{N}\left( s\right) \right) ,\sigma _{k}\right) _{2} \\
&=&-\left( \alpha _{k}^{N}\widetilde{\Gamma }\left( X^{N}\left( s\right)
\right) ,\textit{div}\ \sigma _{k}\right) _{2}=0,
\end{eqnarray*}%
where $\widetilde{\Gamma }$\ is a primitive of $\Gamma $, and the last equality follows from the divergence condition we have mentioned when introducing the $\sigma_k$ family.

Going back to (\ref{estim1}), we get, $\mathbb{P}$-a.s. 
\begin{equation*}
\left\Vert X^{N}\left( t\right) \right\Vert _{2}^{2}+2\int_{0}^{t}\left(
\nabla \Psi \left( X^{N}\left( s\right) \right) ,\nabla X^{N}\left( s\right)
\right) _{2}ds\leq \left\Vert x\right\Vert _{2}^{2},
\end{equation*}%
for all $t\geq 0$, and, due to the strong monotonicity of $\Psi $,\ we obtain that 
\begin{equation}
\underset{t\in \left[ 0,T\right] }{\sup }\left\Vert X^{N}\left( t\right)
\right\Vert _{2}^{2}+2\int_{0}^{t}\psi _{0}\left\vert X^{N}\left( s\right)
\right\vert _{H^{1}\left( \Pi ^{2}\right) }^{2}ds\leq \left\Vert
x\right\Vert _{2}^{2}.  \label{estim3}
\end{equation}

We obtain that, along some subsequence, still indexed by $n$ to simplify reading,%
\begin{eqnarray*}
X^{N} &\rightharpoonup &X\text{ weakly}^{\ast }\text{ in }\ L^2(\Omega;L^{\infty }\left(
0,T;L^{2}\left( \Pi ^{2}\right) \right)), \\
X^{N} &\rightharpoonup &X\text{ weakly in }L^2(\Omega;L^{2}\left( 0,T;H^{1}\left( \Pi
^{2}\right) \right)).
\end{eqnarray*}

%We have skipped the probabilistic arguments, but we invite the reader to note that the convergence is obtained in a $\mathbb{P}$-a.s. sense, which is only possible by changing  
Keeping in mind that $\Psi $\ and $g$ are Lipschitz continuous we also have
that 
\begin{eqnarray*}
\Psi \left( X^{N}\right) &\rightharpoonup &\chi \text{ weakly in }%
L^2(\Omega;L^{2}\left( 0,T;H^{1}\left( \Pi ^{2}\right) \right)), \\
g\left( X^{N}\right) &\rightharpoonup &\rho \text{ weakly in }L^2(\Omega;L^{2}\left( 
0,T;H^{1}\left( \Pi ^{2}\right) \right) ).
\end{eqnarray*}

In order to identify the limits $\Psi \left( X\right) =\chi $ and $g\left(
X\right) =\rho $ we need some strong convergence that will be observed as
explained in the next step.

\bigskip 

\textbf{Step II (Tightness)}

For each $r\geq 2$ there is a constant $C$ independent of $N$ such that for
any $t$ and $s$ such that $0\leq s\leq t\leq T$ we take the difference%
\begin{eqnarray}
\left( X^{N}\left( t\right) -X^{N}\left( s\right) ,e_{j}\right) _{2}
&=&\left( x,e_{j}\right) _{2}+\int_{s}^{t}\left( F\left( l\right)
,e_{j}\right) _{2}dl  \label{estim2} \\
&&+\int_{s}^{t}\left( \Psi \left( X^{N}\left( l\right) \right) ,\Delta
e_{j}\right) _{2}dl+\int_{s}^{t}\left( g\left( X^{N}\left( l\right) \right)
,\Delta e_{j}\right) _{2}dl  \notag \\
&&+\sum_{k\in 
%TCIMACRO{\U{2124} }%
%BeginExpansion
\mathbb{Z}
%EndExpansion
_{0}^{2}}\int_{s}^{t}\alpha _{k}\left( \sigma _{k}\Gamma \left( X^{N}\left(
l\right) \right) ,\nabla e_{j}\right) d\beta _{k}\left( l\right)   \notag \\
&=&I_{s,t}^{1}+I_{s,t}^{2}+I_{s,t}^{3}+I_{s,t}^{4},  \notag
\end{eqnarray}%
and we compute $\mathbb{E}\left( \left( X^{N}\left( t\right) -X^{N}\left(
s\right) ,e_{j}\right) _{2}^{r}\right) $ as follows%
\begin{equation*}
\mathbb{E}\left[ \left\vert I_{s,t}^{1}\right\vert ^{r}\right] =\mathbb{E}%
\left[ \left\vert \int_{s}^{t}\left( F\left( l\right) ,e_{j}\right)
_{2}dl\right\vert ^{r}\right] \leq \left( t-s\right) ^{r},
\end{equation*}%
\begin{eqnarray*}
\mathbb{E}\left[ \left\vert I_{s,t}^{2}\right\vert ^{r}\right]  &=&\mathbb{E}%
\left[ \left\vert \int_{s}^{t}\left( \Psi \left( X^{N}\left( l\right)
\right) ,\Delta e_{j}\right) _{2}dl\right\vert ^{r}\right]  \\
&\leq &\mathbb{E}\left[ \left\vert \int_{s}^{t}\left\Vert \Psi \left(
X^{N}\left( l\right) \right) \right\Vert _{2}\left\Vert \Delta
e_{j}\right\Vert _{\infty }dl\right\vert ^{r}\right]  \\
&\leq &C\lambda _{j}^{r}\left( t-s\right) ^{r},
\end{eqnarray*}%
\begin{eqnarray*}
\mathbb{E}\left[ \left\vert I_{s,t}^{3}\right\vert ^{r}\right]  &=&\mathbb{E}%
\left[ \left\vert \int_{s}^{t}\left( g\left( X^{N}\left( l\right) \right)
,\Delta e_{j}\right) _{2}dl\right\vert ^{r}\right]  \\
&\leq &\mathbb{E}\left[ \left\vert \int_{s}^{t}\left\Vert g\left(
X^{N}\left( l\right) \right) \right\Vert _{2}\left\Vert \Delta
e_{j}\right\Vert _{\infty }dl\right\vert ^{r}\right]  \\
&\leq &C\lambda _{j}^{r}\left( t-s\right) ^{r},
\end{eqnarray*}%
(using the fact that $g$ is Lipschitz because $g^{\prime }$ is bounded as $%
\Gamma ^{\prime }$)%
\begin{eqnarray*}
\mathbb{E}\left[ \left\vert I_{s,t}^{4}\right\vert ^{r}\right]  &=&\mathbb{E}%
\left[ \left\vert \sum_{k\in 
%TCIMACRO{\U{2124} }%
%BeginExpansion
\mathbb{Z}
%EndExpansion
_{0}^{2}}\int_{s}^{t}\alpha _{k}\left( \sigma _{k}\Gamma \left( X^{N}\left(
l\right) \right) ,\nabla e_{j}\right) d\beta _{k}\left( l\right) \right\vert
^{r}\right]  \\
&\leq &C\mathbb{E}\left[ \left\vert \int_{s}^{t}\left( \Gamma \left(
X^{N}\left( l\right) \right) ,\sum_{k\in 
%TCIMACRO{\U{2124} }%
%BeginExpansion
\mathbb{Z}
%EndExpansion
_{0}^{2}}\alpha _{k}\sigma _{k}\nabla e_{j}\right) _{2}^{2}dl\right\vert ^{%
\frac{r}{2}}\right]  \\
&\leq &C\lambda _{j}^{\frac{r}{2}}\left\vert t-s\right\vert ^{\frac{r}{2}}.
\end{eqnarray*}

Going back in (\ref{estim2}) we get that%
\begin{equation*}
\mathbb{E}\left( \left( X^{N}\left( t\right) -X^{N}\left( s\right)
,e_{j}\right) _{2}^{r}\right) \leq C\left\vert t-s\right\vert ^{\frac{r}{2}%
}\left( \lambda _{j}^{\frac{r}{2}}+\lambda _{j}^{r}\right) .
\end{equation*}

Using this estimate we can show that for some $\beta >4$ and $r\geq 4$ we
have 
\begin{equation*}
\mathbb{E}\left[ \left\Vert X^{N}\left( t\right) -X^{N}\left( s\right)
\right\Vert _{H^{-\beta }}^{r}\right] \leq C\left\vert t-s\right\vert ^{%
\frac{r}{2}}
\end{equation*}%
for some $C$ independent of $N$.

For details, see \ Lemma 5 from \cite{IFD1}. Then, for $\alpha \in \left( 
\frac{1}{r},\frac{1}{2}\right) $ we have 
\begin{equation*}
\mathbb{E}\left[ \int_{0}^{T}\int_{0}^{T}\frac{\left\Vert X^{N}\left(
t\right) -X^{N}\left( s\right) \right\Vert _{H^{-\beta }}^{r}}{\left\vert
t-s\right\vert ^{1+\alpha r}}dtds\right] \leq C.
\end{equation*}

Combining the relation above with the estimate (\ref{estim3}) from $Step~I$
we get that 
\begin{equation}
\mathbb{E}\underset{t\in \left[ 0,T\right] }{\sup }\left\Vert X^{N}\left(
t\right) \right\Vert _{2}^{2}+\mathbb{E}\int_{0}^{t}\int_{0}^{t}\frac{%
\left\Vert X^{N}\left( t\right) -X^{N}\left( s\right) \right\Vert
_{H^{-\beta }}^{r}}{\left\vert t-s\right\vert ^{1+\alpha r}}dtds\leq C.
\label{estim4}
\end{equation}

We can use the compactness result in Corollary 5 from \cite{simon} and
obtain that 
\begin{equation*}
L^{\infty }\left( 0,T;L^{2}\left( \Pi ^{2}\right) \right) \cap W^{\alpha
,r}\left( 0,T;H^{-\beta }\left( \Pi ^{2}\right) \right) \subset C\left( 
\left[ 0,T\right] ;H^{-1}\left( \Pi ^{2}\right) \right) ,
\end{equation*}%
with compact inclusion.

More precisely, if we take 
\begin{equation*}
K_{R}=\left\{ f\in C\left( \left[ 0,T\right] ;H^{-1}\left( \Pi ^{2}\right)
~\left\vert \left\Vert f\right\Vert _{L^{\infty }\left( 0,T;L^{2}\left( \Pi
^{2}\right) \right) }+\left\Vert f\right\Vert _{W^{\alpha ,r}\left(
0,T;H^{-\beta }\left( \Pi ^{2}\right) \right) }\leq R\right. \right)
\right\} ,
\end{equation*}%
we have that $K_{R}$ is compact in $C\left( \left[ 0,T\right] ;H^{-1}\left(
\Pi ^{2}\right) \right) $.

Then, for all $\varepsilon >0$ we have by using Markov's inequality that 
\begin{equation*}
\nu ^{N}\left( K_{R}^{c}\right) \leq \frac{C}{R}\leq \varepsilon 
\end{equation*}%
for $R$ sufficiently large, and we can conclude that the family of laws $%
\left\{ \nu ^{N}\right\} _{N}$ is tight in the space $C\left( \left[ 0,T%
\right] ;H^{-1}\left( \Pi ^{2}\right) \right) $.

\bigskip 

\textbf{Step III (Passage to the limit)}

By using the Skorokhod representation theorem we can find an auxiliary
probability space $\left( \widetilde{\Omega },\widetilde{\mathcal{F}},%
\widetilde{\mathbb{P}}\right) $ and the processes $\left( \widetilde{X}^{N},%
\widetilde{W}^{N}=\left\{ \widetilde{\beta }^{N,k}\right\} _{k\in \mathbb{Z}%
_{0}^{2}}\right) $ and $\left( \widetilde{X},\widetilde{W}=\left\{ 
\widetilde{\beta }^{k}\right\} _{k\in \mathbb{Z}_{0}^{2}}\right) $ such that%
\begin{eqnarray*}
\widetilde{X}^{N} &\rightarrow &\widetilde{X}\text{ in }C\left( \left[ 0,T%
\right] ;H^{-1}\left( \Pi ^{2}\right) \right) \text{ }\mathbb{P}\text{-a.s.}
\\
\widetilde{\beta }^{N,k} &\rightarrow &\widetilde{\beta }^{k}\text{ in }%
C\left( \left[ 0,T\right] ;\mathbb{Z}_{0}^{2}\right) \text{ }\mathbb{P}\text{%
-a.s.}
\end{eqnarray*}%
where the convergence of $\widetilde{W}^{N}$ to $\widetilde{W}$ can be seen
as the uniform convergence of cylindrical Wiener processes $\widetilde{W}%
^{N}=\sum_{k\in 
%TCIMACRO{\U{2124} }%
%BeginExpansion
\mathbb{Z}
%EndExpansion
_{0}^{2}}e_{k}\widetilde{\beta }^{N,k}$ to $\widetilde{W}=\sum_{k\in 
%TCIMACRO{\U{2124} }%
%BeginExpansion
\mathbb{Z}
%EndExpansion
_{0}^{2}}e_{k}\widetilde{\beta }^{k}$ on the suitable Hilbert space $%
L^{2}\left( \Pi ^{2}\right) $.

On the other hand, we have from $Step~I$ that 
\begin{eqnarray*}
\Psi \left( \widetilde{X}^{N}\right)&\rightharpoonup &\chi \text{ weakly
in }L^2(\Omega;L^{2}\left( 0,T;H^{1}\left( \Pi ^{2}\right) \right))\\
g\left( \widetilde{X}^{N}\right)  &\rightharpoonup &\rho \text{ weakly in }%
L^2(\Omega;L^{2}\left( 0,T;H^{1}\left( \Pi ^{2}\right) \right) ).
\end{eqnarray*}

Since $\Delta \Psi $ and $\Delta g$ are maximal monotone operators in $%
H^{-1}\left( \Pi ^{2}\right) $ and $\Psi$ and $g$ are maximal monotone operators in $L^{2}$ we get by combining the strong and the weak
convergences in a classical way that $\Psi \left( \widetilde{X}\right) =\chi $ and $g\left( 
\widetilde{X}\right) =\rho $.

We can now pass to the limit in 
\begin{eqnarray*}
\left( \widetilde{X}^{N}\left( t\right) ,e_{j}\right) _{2} &=&\left(
x,e_{j}\right) _{2}+\int_{0}^{t}\left( F\left( s\right) ,e_{j}\right) _{2}ds
\\
&&+\int_{0}^{t}\left( \Psi \left( \widetilde{X}^{N}\left( s\right) \right)
,\Delta e_{j}\right) _{2}ds+\int_{0}^{t}\left( g\left( \widetilde{X}%
^{N}\left( s\right) \right) ,\Delta e_{j}\right) _{2}ds \\
&&+\sum_{k\in 
%TCIMACRO{\U{2124} }%
%BeginExpansion
\mathbb{Z}
%EndExpansion
_{0}^{2}}\int_{0}^{t}\alpha _{k}^{N}\left( \sigma _{k}\Gamma \left( 
\widetilde{X}^{N}\left( s\right) \right) ,\nabla e_{j}\right) d\widetilde{%
\beta }_{k}\left( s\right) 
\end{eqnarray*}%
as follows.
\begin{enumerate}
    \item First, let us note that the left-hand term converges $\mathbb{P}$-a.s. to  $\left( \widetilde{X}\left( t\right) ,e_{j}\right) _{2}$.
    \item For the stochastic term, we use the Burkholder-Davis-Gundi inequality and we
have%
\begin{eqnarray*}
&&\mathbb{E}\left[ \underset{t\in \left[ 0,T\right] }{\sup }\left\vert
\sum_{k\in 
%TCIMACRO{\U{2124} }%
%BeginExpansion
\mathbb{Z}
%EndExpansion
_{0}^{2}}\int_{0}^{t}\alpha _{k}^{N}\left( \Gamma \left( \widetilde{X}%
^{N}\left( s\right) \right) ,\sigma _{k}\nabla e_{j}\right) _{2}d\widetilde{%
\beta }_{k}\left( s\right) \right\vert ^{2}\right]  \\
&\leq &\mathbb{E}\left[ \sum_{k\in 
%TCIMACRO{\U{2124} }%
%BeginExpansion
\mathbb{Z}
%EndExpansion
_{0}^{2}}\int_{0}^{T}\left( \alpha _{k}^{N}\right) ^{2}\left( \Gamma \left( 
\widetilde{X}^{N}\left( s\right) \right) ,\sigma _{k}\nabla e_{j}\right)
_{2}^{2}ds\right]  \\
&\leq &\left\Vert \alpha ^{N}\right\Vert _{l^{\infty }}^{2}\mathbb{E}\left[
\int_{0}^{T}\left\Vert \Gamma \left( \widetilde{X}^{N}\left( s\right)
\right) \nabla e_{j}\right\Vert _{2}^{2}ds\right]  \\
&\leq &C\left\Vert \alpha ^{N}\right\Vert _{l^{\infty }}^{2}\lambda _{j}^{2}%
\mathbb{E}\left[ \int_{0}^{T}\left\Vert \widetilde{X}^{N}\left( s\right)
\right\Vert _{2}^{2}ds\right] \rightarrow 0.
\end{eqnarray*}%
for $N\rightarrow \infty $. 
\item We deduce that both the left-hand member and the stochastic integral converge to $\left( \widetilde{X}\left( t\right) ,e_{j}\right) _{2}$ and $0$ respectively in $L^2(\Omega;L^2(0,T;\mathbb{R}))$ in the strong, hence the weak topology. Passing to the limit in this space and in the weak sense, we get \begin{eqnarray*}
\left( \widetilde{X}\left( t\right) ,e_{j}\right) _{2} &=&\left(
x,e_{j}\right) _{2}+\int_{0}^{t}\left( F\left( s\right) ,e_{j}\right) _{2}ds
\\
&&+\int_{0}^{t}\left( \Psi \left( \widetilde{X}\left( s\right) \right)
,\Delta e_{j}\right) _{2}ds+\int_{0}^{t}\left( g\left( \widetilde{X}\left(
s\right) \right) ,\Delta e_{j}\right) _{2}ds \\
&&+\underset{N\rightarrow \infty }{\lim }\sum_{k\in 
%TCIMACRO{\U{2124} }%
%BeginExpansion
\mathbb{Z}
%EndExpansion
_{0}^{2}}\int_{0}^{t}\alpha _{k}^{N}\left( \Gamma \left( \widetilde{X}%
^{N}\left( s\right) \right) ,\sigma _{k}\nabla e_{j}\right) _{2}d\widetilde{%
\beta }_{k}\left( s\right).
\end{eqnarray*}
\end{enumerate}

This concludes our proof.
\end{proof}
\begin{remark}
    The aforementioned argument only relies on the weak convergence in $L^2(\Omega;L^2(0,T;H^1(\Pi^2)))$ of the sequence $X^N$. Alternatively, owing to the estimates \eqref{estim3} that are uniform in $\omega$, one can deduce the weak convergence of $X^N$ (or, more precisely, of a subsequence) in a $\mathbb{P}$-a.s. sense. This can be achieved by applying, for instance, \cite[Theorem 3.1]{Yannelis}.
\end{remark}
\section*{Acknowledgments}
I.C. was supported in parts by the DEFHY3GEO project funded by Région Normandie and European Union with ERDF fund (convention 21E05300).\\
The research of F.F. is funded by the European Union, ERC NoisyFluid, No. 101053472.\\
D.G. acknowledges financial support from National Sciences and Engineering Research Council (NSERC), Canada, 
Grant/Award Number: RGPIN-2025-03963.
\bigskip

\bigskip

\bigskip

\end{document}